\documentclass{amsart}
\usepackage{amsmath,amssymb,amsthm}
\usepackage{fullpage}

\newtheorem{lemma}{Lemma}
\newtheorem{theorem}{Theorem}

\usepackage{hyperref}

\usepackage{color}

\usepackage{charter}
\usepackage{euler}
\usepackage[T1]{fontenc}

\newcommand{\R}{\ensuremath{\mathbb{R}}}

\newcommand{\Z}{\ensuremath{\mathbb{Z}}}

\newcommand{\dis}{\mathrm{dis}}
\newcommand{\codis}{\mathrm{codis}}

\newcommand{\gh}{\mathrm{GH}}

\title{Tight upper bound on $d_\gh(S^1,S^{2k+1})$: GPT's short proof}

\author{Henry Adams}
\email{henry.adams@ufl.edu}

\begin{document}

\begin{abstract}
We describe GPT-5.6 Sol Pro's short proof that $2\cdot d_\gh(S^1,S^{2k+1}) \le \tfrac{2\pi k}{2k+1}$.
This is the odd-dimensional case of a tight upper bound by Harrison and Jeffs on the Gromov--Hausdorff distance between the circle and spheres.
\end{abstract}

\maketitle

Let $k\ge 1$ be an integer.
We give a short proof of the upper bound $2\cdot d_\gh(S^1,S^{2k+1}) \le \tfrac{2\pi k}{2k+1}$.
This is the odd-dimensional case of a tight upper bound by Harrison and Jeffs for the Gromov--Hausdorff distance between the circle and spheres~\cite{harrison2023quantitative}.
The proof was produced by GPT-5.6 Sol Pro in ChatGPT; see\footnote{\url{https://chatgpt.com/share/6a86752b-f5d0-83ea-98d6-f5d1b32039ad}, August 18, 2026} for the prompt and response and see\footnote{\url{https://drive.google.com/file/d/1mzUUsoGNrSkLnov-YpWmdjQXEknZzIFX/view?usp=sharing}} for a PDF of the model's proof.
We edited the writing, not the mathematics.

The equalities $2\cdot d_\gh(S^1,S^2)=2\cdot d_\gh(S^1,S^3)=\tfrac{2\pi}{3}$ were proved in~\cite{lim2023gromov}, in part building on~\cite{dubins1981equidiscontinuity}.
The lower bounds $2\cdot d_\gh(S^1,S^{2k})\ge\tfrac{2\pi k}{2k+1}$ and $2\cdot d_\gh(S^1,S^{2k+1})\ge\tfrac{2\pi k}{2k+1}$ were proved in~\cite{GH-BU-VR}, in part building on the homotopy types of Vietoris--Rips complexes of the circle~\cite{AA-VRS1,moy2023vietoris}.
Question~8.1 of~\cite{GH-BU-VR} asked whether equality holds in these lower bounds.
Harrison and Jeffs~\cite{harrison2023quantitative} answered this question by proving matching upper bounds, implying $2\cdot d_\gh(S^1,S^{2k+1})=\tfrac{2\pi k}{2k+1}=2\cdot d_\gh(S^1,S^{2k})$.
A second proof appears in~\cite{rodriguez2026some}.

We prove $2\cdot d_\gh(S^1,S^{2k+1})\le\tfrac{2\pi k}{2k+1}$, and hence $2\cdot d_\gh(S^1,S^{2k+1})=\tfrac{2\pi k}{2k+1}$.
We use the symmetric moment curve $g\colon S^1\to S^{2k+1}$ defined by
\begin{equation}
\label{eq:sm}
g(t)=\tfrac{1}{\sqrt{k+1}}\bigl(\cos t,\sin t,\cos 3t,\sin 3t,\ldots,\cos ((2k+1)t),\sin ((2k+1)t)\bigr).
\end{equation}
This curve was defined by Barvinok and Novik in~\cite{barvinok2008centrally}; see also~\cite{barvinok2013neighborliness,smilansky1985convex}.
It was related to Vietoris--Rips complexes of the circle by Adams, Bush, and Frick in~\cite{ABF,BushThesis}.
Lim, M{\'e}moli, and Smith used variants of this curve to understand $d_\gh(S^1,S^2)$ and $d_\gh(S^1,S^3)$~\cite{lim2023gromov}.
Conversations with Frick and M{\'e}moli about this curve helped initiate the polymath collaboration~\cite{GH-BU-VR}.
As described in~\cite{memoli2024embedding}, I proposed combining the symmetric moment curve with M{\'e}moli and Smith's embedding-projection correspondences to study $d_\gh(S^1,S^{2k+1})$ for all $k$; this proposal is realized here.
A variant of the symmetric moment curve to study $d_\gh(S^1,S^{2k})$ was proposed in~\cite{memoli2024embedding}.
The symmetric moment curve also parametrizes the orbit of a point under the diagonal $S^1$-action on $S^1_1 * S^1_3 * \ldots * S^1_{2k+1}\cong S^{2k+1}$, where $S^1_j$ denotes a copy of $S^1$ with the action $z\cdot w=z^j w$; see~\cite{PersistentEquivariantCohomology}.

The Gromov--Hausdorff distance provides a notion of dissimilarity between metric spaces~\cite{edwards1975structure,gromov1981groups,gromov1981structures,tuzhilin2016invented}.
It follows from~\cite{kalton1999distances} that the Gromov--Hausdorff distance between compact metric spaces can be defined as
\begin{equation}
\label{eq:ko}
2\cdot d_\gh(X,Y) = \inf_{f,h}\max\{\dis(f),\dis(h),\codis(f,h)\},
\end{equation}
where $f\colon X\to Y$ and $h\colon Y \to X$ are (possibly discontinuous) functions. The \emph{distortion} of $f$ is defined as $\dis(f)=\sup_{x,x'\in X}|d(x,x')-d(f(x),f(x'))|$, and the \emph{codistortion} of $f$ and $h$ is defined as $\codis(f,h)=\sup_{x\in X,y\in Y}|d(x,h(y))-d(f(x),y)|$.

Equip each sphere with its geodesic metric.
Let $S^1=\R/2\pi\Z$.
For $t,t'\in S^1$, let $d(t,t')=\min_{j\in\Z}|t-t'+2\pi j|$.
For $x,x'\in S^{2k+1}$, we have $d(x,x')=\arccos\langle x,x'\rangle$.
Define $g\colon S^1\to S^{2k+1}$ by~\eqref{eq:sm}.
Choose $h\colon S^{2k+1}\to S^1$ to be any function satisfying $d(x,g(h(x)))=d(x,g(S^1))$ for every $x\in S^{2k+1}$; such an $h$ exists because the continuous function $t\mapsto d(x,g(t))$ attains its minimum on the compact space $S^1$.

\begin{theorem}
\label{thm:main}
Let $k\ge 1$ be an integer.
If $x,x'\in S^{2k+1}$ and $t,t'\in S^1$ satisfy $d(x,g(t))=d(x,g(S^1))$ and $d(x',g(t'))=d(x',g(S^1))$, then $\bigl|d(x,x')-d(t,t')\bigr|\leq \tfrac{2\pi k}{2k+1}$.
\end{theorem}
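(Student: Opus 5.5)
The plan is to use the antipodal symmetry of the symmetric moment curve to reduce the two-sided estimate to a one-sided one. After that, the remaining inequality should follow from the geometry of nearest points on the curve.

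\textbf{Step 1: reduction to one inequality.} Each coordinate of $g$ has odd frequency, so $g(t+\pi)=-g(t)$. The map $x\mapsto -x$ is an isometry of $S^{2k+1}$, so if $t$ is a nearest parameter for $x'$ then $t+\pi$ is a nearest parameter for $-x'$. We also have $d(x,-x')=\pi-d(x,x')$ and $d(t,t'+\pi)=\pi-d(t,t')$. With $y=-x'$ and $s=t'+\pi$ this gives
\[
d(x,x')-d(t,t') = d(t,s)-d(x,y),
\]
where $s$ is a nearest parameter for $y$. So both signs of $d(x,x')-d(t,t')$ are covered once we prove, for all $x,y\in S^{2k+1}$ with nearest parameters $t,s$,
\[
d(t,s)-d(x,y)\le \tfrac{2\pi k}{2k+1}.
\]
Write $c=\tfrac{2\pi k}{2k+1}=\pi-\tfrac{\pi}{2k+1}$. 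The inequality is trivial unless $d(t,s)>c$. So we may assume $s=t+\pi-\varphi$ with $0\le\varphi<\tfrac{\pi}{2k+1}$, and we must show $d(x,y)\ge \tfrac{\pi}{2k+1}-\varphi$.

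\textbf{Step 2: the trigonometric structure.} Set $F_x(u)=\langle x,g(u)\rangle$. This is a trigonometric polynomial with only odd frequencies $1,3,\dots,2k+1$, so $F_x(u+\pi)=-F_x(u)$. Its maximum is attained at $u=t$, so $F_x(t)\ge |F_x(u)|$ for all $u$; in particular $F_x(t+\pi)=-F_x(t)$ is the minimum of $F_x$.

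Along the curve, $\langle g(t),g(u)\rangle=\frac{1}{k+1}\sum_{j=0}^{k}\cos((2j+1)(t-u))=\frac{\sin(2(k+1)(t-u))}{2(k+1)\sin(t-u)}$, a Fejér/Dirichlet-type kernel. I would record that this kernel is decreasing on $[0,\tfrac{\pi}{2k+1}]$ and bounded there by values computable at the nodes $\tfrac{m\pi}{2k+1}$.

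The key lemma I would aim for is a separation statement. Suppose $x$ has nearest parameter $t$ and $y$ has nearest parameter $t+\pi-\varphi$. Then $\langle x,y\rangle \le \cos(\tfrac{\pi}{2k+1}-\varphi)$, which is equivalent to the required lower bound on $d(x,y)$.

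\textbf{Step 3: proving the separation lemma (main obstacle).} The naive route is the triangle inequality $d(x,y)\ge d(g(t),g(s))-d(x,g(t))-d(y,g(s))$ combined with the covering radius of $g(S^1)$. I expect this to be too lossy, because the curve has speed $\sqrt{\sum(2j+1)^2/(k+1)}>1$. Instead I would exploit the extremality directly, as follows.

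1. Consider the $2k+1$ equally spaced nodes $u_m=t+\tfrac{2m\pi}{2k+1}$, together with their antipodes. An odd-frequency polynomial of degree at most $2k+1$ has its averages over such node sets controlled by its top coefficient. From the fact that $F_x$ attains its maximum at $t$, I would derive a bound of the form $F_x(u)\le F_x(t)\,K(u-t)$ for an explicit kernel $K$ related to the Dirichlet sum above. Equivalently, $x$ lies in the dual cone of the vectors $g(t)-g(u)$.
2. Apply the same reasoning to $y$ at $s$.
3. Pair the two cone conditions, since the cones are essentially antipodal up to the shift $\varphi$. This should force $\langle x,y\rangle\le\cos(\tfrac{\pi}{2k+1}-\varphi)$, with equality in the extremal configuration where $x$ and $y$ sit at the centers of adjacent facets of the convex hull of the curve (the Barvinok--Novik neighborliness picture).

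Concretely, I would first try to write $\langle x,y\rangle$ as a convex combination of inner products controlled by $F_x$ and $F_y$ at the nodes. I would then use the first-order conditions $F_x'(t)=0$ and $F_y'(s)=0$ to eliminate the tangential components, and conclude by a one-variable inequality in $\varphi$ on $[0,\tfrac{\pi}{2k+1})$.
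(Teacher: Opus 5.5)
\textbf{Verdict: genuine gap.} Your Step 1 is correct and is the same antipodal reduction the paper uses. The paper does it last: it derives $\delta_{2k+1}\le\delta_1+\tfrac{2\pi k}{2k+1}$ from the other inequality applied to $x,-x'$ and $t,t'+\pi$. Your target $d(x,y)\ge\tfrac{\pi}{2k+1}-\varphi$ is exactly the paper's $\delta_{2k+1}\ge\beta$, with $\eta=\varphi$.

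Step 3, however, carries all of the content, and it gives no working mechanism. The one concrete estimate you propose is false. Take $K(w)=\langle g(0),g(w)\rangle$, the Dirichlet-type kernel, and take $x$ supported on the top-frequency block, so that $F_x(u)=\tfrac{1}{\sqrt{k+1}}\cos((2k+1)u)$ and $t=0$ is a maximizer. Then $F_x(u)\le F_x(t)K(u-t)$ fails in two places:
\begin{itemize}
\item At $u=\tfrac{2\pi}{2k+1}$, the left side equals $F_x(0)>0$, while the right side is $F_x(0)K(u)<F_x(0)$.
\item Near $u=\pi$, the bound would require $K(w)\le\cos((2k+1)w)$ for small $w$. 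This fails because $K$ decays more slowly at $0$: its quadratic coefficient is the average of $(2j+1)^2$, not $(2k+1)^2$.
\end{itemize}
Moreover, the decisive input is global maximality, used through $|F_x|\le F_x(t)$, and first-order conditions do not capture it. For example, $x=y=g(0)$, $t=0$, $s=\pi$ satisfies $F_x'(t)=F_y'(s)=0$, yet $d(x,y)=0$ while $d(t,s)=\pi$.

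What is missing is the following chain.
\begin{enumerate}
\item[(i)] Apply the Szeg\H{o}/van der Corput--Schaake inequality (Lemma~\ref{lem:vdcs}) to $T(w)=F_x(t+w)/M_x$. This is admissible precisely because antipodality gives $|F_x|\le M_x$. It yields $F_x(t+w)\ge M_x\cos((2k+1)|w|)$ for $|w|\le\tfrac{\pi}{2k+1}$. So the right envelope is the pure top frequency, not the Dirichlet kernel, and by antipodality $F_x$ stays near $-M_x$ close to $t+\pi$.
\item[(ii)] Test $y-x$ against the single unit vector $g(u_0)$ with $u_0=t+\pi-\tfrac{\varphi}{2}$, the midpoint between $s$ and $t+\pi$. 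This gives $\|y-x\|\ge(M_x+M_y)\cos\tfrac{(2k+1)\varphi}{2}=(M_x+M_y)\sin\tfrac{(2k+1)\beta}{2}$, where $\beta=\tfrac{\pi}{2k+1}-\varphi$.
\item[(iii)] Bound the maxima from below: $M_x,M_y\ge\tfrac{1}{\sqrt{2k+2}}$ by Parseval. Without this, (ii) yields no absolute bound.
\item[(iv)] Use $\sin((2k+1)z)\ge\sqrt{2k+2}\,\sin z$ for $0\le z\le\tfrac{\pi}{2(2k+1)}$ (Lemma~\ref{lem:sine}) with $z=\beta/2$. This gives $2\sin\tfrac{d(x,y)}{2}=\|y-x\|\ge 2\sin\tfrac{\beta}{2}$, hence $d(x,y)\ge\beta$.
\end{enumerate}
Your node-averaging and dual-cone observations are true but only restate the top coefficient and maximality. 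They do not produce any of these steps, and the claimed equality case at facet centers is unsupported.
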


For $t,t'\in S^1$ we deduce $\bigl|d(g(t),g(t'))-d(t,t')\bigr|\leq \tfrac{2\pi k}{2k+1}$; for $x,x'\in S^{2k+1}$ we deduce $\bigl|d(x,x')-d(h(x),h(x'))\bigr|\leq \tfrac{2\pi k}{2k+1}$; and for $x\in S^{2k+1}$ and $t\in S^1$ we deduce $\bigl|d(x,g(t))-d(h(x),t)\bigr|\leq \tfrac{2\pi k}{2k+1}$.
By~\eqref{eq:ko}, we have $2\cdot d_\gh(S^1,S^{2k+1}) \le \max\{\dis(g),\dis(h),\codis(g,h)\}\le \tfrac{2\pi k}{2k+1}$.
Combining with the lower bound~\cite[Remark~5.5]{GH-BU-VR} gives $2\cdot d_\gh(S^1,S^{2k+1})=\tfrac{2\pi k}{2k+1}$.

\begin{proof}[Proof of Theorem~\ref{thm:main}]
For a unit vector $y\in S^{2k+1}$, define $F_y\colon S^1\to \R$ by $F_y(s)=\langle y,g(s)\rangle$.
Note $d(y,g(s))=d(y,g(S^1))$ if and only if $F_y(s)=\max_u F_y(u)$.
Since $g(u+\pi)=-g(u)$, we have $F_y(u+\pi)=-F_y(u)$.
Thus, if $M_y=\max_u F_y(u)$, then $\min_u F_y(u)=-M_y$, so $|F_y(u)|\leq M_y$ for every $u$.
Moreover, writing $y=(a_0,b_0,\ldots,a_k,b_k)$ and using orthogonality of the sine and cosine functions,
%\[\tfrac{1}{2\pi}\int_0^{2\pi}F_y(u)^2\,du=\tfrac{1}{2k+2}\sum_{j=0}^k(a_j^2+b_j^2)=\tfrac{1}{2k+2}\qquad \text{implying} \qquad M_y\geq\tfrac{1}{\sqrt{2k+2}}.\]
%\[
\[
\tfrac{1}{2\pi}\int_{0}^{2\pi} F_y(u)^2\,du
=
\tfrac{1}{2\pi(k+1)}
\sum_{j=0}^{k}
\int_{0}^{2\pi}
(a_j^2\cos^2((2j+1)u)+
b_j^2\sin^2((2j+1)u))du 
=
\tfrac{1}{2k+2}\sum_{j=0}^{k}(a_j^2+b_j^2)
=
\tfrac{1}{2k+2}.
\]
Hence $M_y\geq\tfrac{1}{\sqrt{2k+2}}$.
%If $s$ is a maximizing parameter for $F_y$, applying Lemma~\ref{lem:vdcs} to the trigonometric polynomial $u\mapsto \tfrac{F_y(s+u)}{M_y}$ gives $F_y(s+u)\geq M_y\cos((2k+1)|u|)$ for $|u|\leq\tfrac{\pi}{2k+1}$.

Set $\delta_{2k+1}=d(x,x')$ and $\delta_{1}=d(t,t')$.
We first prove
$\delta_{2k+1}\geq\delta_{1}-\tfrac{2\pi k}{2k+1}$.
This is automatic when $\delta_{1}\leq \tfrac{2\pi k}{2k+1}$, so suppose $\delta_{1}>\tfrac{2\pi k}{2k+1}$ and write $\delta_{1}=\tfrac{2\pi k}{2k+1}+\beta$ with $0<\beta\leq\tfrac{\pi}{2k+1}$.
The block-diagonal orthogonal action that rotates the block of frequency $2j+1$ through angle $(2j+1)\theta$ satisfies $g(u+\theta)=R_\theta g(u)$.
Applying $R_{-t}$ simultaneously to $x$ and $x'$, and then applying the reflection taking $g(u)$ to $g(-u)$ if necessary, preserves all relevant distances and maximizing conditions.
We may therefore assume
\[t=0, \qquad t'=\delta_{1}=\pi-\eta, \qquad \eta=\tfrac{\pi}{2k+1}-\beta.\]
Let $u_0=\pi-\tfrac{\eta}{2}$ be the midpoint, in the chosen coordinate interval, between $t'$ and $\pi$.
Since $0$ maximizes $F_x$, applying Lemma~\ref{lem:vdcs} to the trigonometric polynomial $u\mapsto \tfrac{F_x(u)}{M_x}$ gives $F_x(u)\geq M_x\cos((2k+1)|u|)$ for $|u|\leq\tfrac{\pi}{2k+1}$.
Hence $F_x(u_0)=-F_x(\tfrac{-\eta}{2})\leq -M_x\cos\tfrac{(2k+1)\eta}{2}$.
Since $t'$ maximizes $F_{x'}$ and $u_0-t'=\tfrac{\eta}{2}$, applying Lemma~\ref{lem:vdcs} to $u\mapsto\tfrac{F_{x'}(u+t')}{M_{x'}}$ gives $F_{x'}(u_0)\geq M_{x'}\cos\tfrac{(2k+1)\eta}{2}$.
Because $(2k+1)\eta=\pi-(2k+1)\beta$, we have $\cos\tfrac{(2k+1)\eta}{2}=\sin\tfrac{(2k+1)\beta}{2}$.
It follows that
\[\|x'-x\| \geq \langle x'-x,g(u_0)\rangle = F_{x'}(u_0)-F_x(u_0) \geq (M_x+M_{x'})\sin\tfrac{(2k+1)\beta}{2} \geq \tfrac{2}{\sqrt{2k+2}}\sin\tfrac{(2k+1)\beta}{2} \geq 2\sin\tfrac{\beta}{2},\]
where we used $M_x,M_{x'}\geq\tfrac{1}{\sqrt{2k+2}}$ and Lemma~\ref{lem:sine} with $z=\tfrac{\beta}{2}$.
Since $\|x'-x\|=2\sin\tfrac{\delta_{2k+1}}{2}$ and since sine is increasing on $[0,\frac{\pi}{2}]$, we obtain $\delta_{2k+1}\geq\beta=\delta_{1}-\tfrac{2\pi k}{2k+1}$.

We now prove $\delta_{2k+1}\leq\delta_{1}+\tfrac{2\pi k}{2k+1}$.
If $\delta_{1}\geq\tfrac{\pi}{2k+1}$, then $\delta_{1}+\tfrac{2\pi k}{2k+1}\geq\pi$, so this bound is automatic.
Suppose $\delta_{1}<\tfrac{\pi}{2k+1}$.
Since $d(x',g(t'))=d(x',g(S^1))$, we have $d(-x',g(t'+\pi))=d(-x',g(S^1))$.
Moreover, $d(t,t'+\pi)=\pi-\delta_{1}>\tfrac{2\pi k}{2k+1}$.
Applying $\delta_{2k+1}\geq\delta_{1}-\tfrac{2\pi k}{2k+1}$ to $x$ and $-x'$, with $d(x,g(t))=d(x,g(S^1))$ and $d(-x',g(t'+\pi))=d(-x',g(S^1))$, gives
$\pi-\delta_{2k+1}=d(x,-x')\geq d(t,t'+\pi)-\tfrac{2\pi k}{2k+1} = (\pi-\delta_{1})-\tfrac{2\pi k}{2k+1}=\tfrac{\pi}{2k+1}-\delta_{1}$.
Hence $\delta_{2k+1}\leq \tfrac{2\pi k}{2k+1}+\delta_{1}$.
\end{proof}

The above proof used the following two lemmas.

\begin{lemma}[p.~69 of~\cite{Szego1928}, Satz~8 of~\cite{VanDerCorputSchaake1935}, Theorem~3.7 of~\cite{QueffelecZarouf2019}]
\label{lem:vdcs}
Let $m\ge 1$, and let $T$ be a real trigonometric polynomial of degree at most $m$ such that $|T(u)|\leq 1$ for every $u$.
Then $|T'(u)|\leq m\sqrt{1-T(u)^2}$ for $u\in\R$.
If furthermore $T(0)=1$, then $T(u)\geq \cos(m|u|)$ for $|u|\leq \tfrac{\pi}{m}$.
\end{lemma}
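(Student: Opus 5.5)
The plan is to prove the derivative inequality first and then obtain the lower bound $T(u)\ge\cos(m|u|)$ by a comparison (ODE) argument.

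\emph{Step 1: the derivative bound.} This is the classical Szeg\H{o}/van der Corput--Schaake inequality, and it is where I expect the main difficulty. I would argue by contradiction. Suppose that at some $u_0$ we have $|T(u_0)|<1$ and $|T'(u_0)|>m\sqrt{1-T(u_0)^2}$; at points with $|T(u_0)|=1$ the bound is automatic, since then $u_0$ is an extremum and $T'(u_0)=0$. Write $T(u_0)=\cos\alpha$ and choose $\lambda<1$ close to $1$ so that the strict inequality $|T'(u_0)|>m\lambda^{-1}\sqrt{\lambda^2-T(u_0)^2}$ still holds. Choose a phase $\phi$ such that $C(u)=\lambda\cos(m(u-u_0)+\phi)$ satisfies $C(u_0)=T(u_0)$ and $|C'(u_0)|<|T'(u_0)|$, with derivatives of the same sign.

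Then $D=C-T$ is a trigonometric polynomial of degree at most $m$, so it has at most $2m$ zeros in a period unless it vanishes identically. I would count sign changes. At the $2m$ extremal points of $C$ on a period, the value $\pm\lambda$ dominates $|T|\le 1$ in sign only when $\lambda>\max|T|$, so I would first replace $T$ by $\rho T$ with $\rho<1$ and then let $\rho\to1$ at the end. This gives $2m$ alternations of sign for $D$. The extra tangential crossing at $u_0$, where $D$ vanishes but $D'\neq 0$ between two alternations, then forces at least $2m+2$ zeros, a contradiction. The standard bookkeeping is to check that $u_0$ lies strictly between two consecutive extrema of $C$ and adds two extra roots in that interval. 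This interlacing count is the delicate step; if it gets messy, I would fall back on citing the Bernstein inequality argument as in \cite{QueffelecZarouf2019}.

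\emph{Step 2: the comparison.} Assume $T(0)=1$. By symmetry under $u\mapsto -u$ (applied to $T(-u)$), it suffices to treat $u\in[0,\pi/m]$. Let $\theta(u)=\arccos T(u)\in[0,\pi]$, so $\theta(0)=0$. Wherever $|T|<1$, Step 1 gives
\[
|\theta'(u)|=\frac{|T'(u)|}{\sqrt{1-T(u)^2}}\le m .
\]
I would then show that $\theta(u)\le mu$. One route is to make $\theta$ Lipschitz: take $\theta_\epsilon=\arccos((1-\epsilon)T)$, which is smooth, and check that it has derivative at most $m$ up to $O(\epsilon)$ errors. Alternatively, one can note directly that $\arccos\circ T$ is $m$-Lipschitz, because $T$ is $1$-Lipschitz composed in the right way; the smoothing is the safer approach. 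Integrating gives $\theta(u)\le\theta(0)+mu=mu$, and since $mu\le\pi$ and $\cos$ is decreasing on $[0,\pi]$, we obtain $T(u)=\cos\theta(u)\ge\cos(mu)$.

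\emph{Step 3: the degenerate case.} If $T$ is constant, or if $|T|=1$ on a set where $\theta$ fails to be differentiable, the Lipschitz formulation in Step 2 already handles it, because an absolutely continuous function with $|\theta'|\le m$ almost everywhere is $m$-Lipschitz.
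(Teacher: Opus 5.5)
Your Step~2 is the paper's argument. The paper sets $\theta_\varepsilon=\arccos((1-\varepsilon)T)$ and applies the derivative bound to $(1-\varepsilon)T$ itself, which is allowed since $|(1-\varepsilon)T|\le 1$. That gives $|\theta_\varepsilon'|\le m$ exactly, so there are no $O(\varepsilon)$ error terms to track. It then integrates from $0$ and lets $\varepsilon\to0^+$. With this smoothing your Step~3 is unnecessary. Drop the aside that $\arccos\circ T$ is Lipschitz because $T$ is $1$-Lipschitz: $T$ is only $m$-Lipschitz and $\arccos$ is not Lipschitz at $\pm1$, so that justification is empty.

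For Step~1 the paper simply cites Theorem~3.7 of~\cite{QueffelecZarouf2019}, i.e.\ $(T')^2+m^2T^2\le m^2$, which is your fallback. Your zero-counting sketch is the classical self-contained proof and removes the dependence on that citation. As written, though, its parameters clash. With $\lambda<1$ fixed, strict alternation of $C-\rho T$ at the extrema of $C$ requires $\rho\max|T|<\lambda<1$, so you cannot afterwards let $\rho\to1$. There is also no limit to take: this is a contradiction at a single point, so one suitable choice of parameters suffices.

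The clean version uses no rescaling. The quantity to beat is $|C'(u_0)|=m\sqrt{\lambda^2-T(u_0)^2}$, so pick $\lambda>1$ close to $1$ with $|T'(u_0)|>m\sqrt{\lambda^2-T(u_0)^2}$.
\begin{itemize}
\item At the $2m$ extrema of $C$ we have $|C|=\lambda>1\ge|T|$, so $D=C-T$ strictly alternates in sign there.
\item The point $u_0$ is not an extremum of $C$, since $|C(u_0)|=|T(u_0)|\le1<\lambda$.
\item $D$ crosses zero at $u_0$ transversally (not tangentially). Because $C'(u_0)$ and $T'(u_0)$ have the same sign and $|T'(u_0)|>|C'(u_0)|$, the sign of $D'(u_0)$ is opposite to the net change of $D$ across that arc.
\end{itemize}
So that arc contains at least three zeros. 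Hence $D\not\equiv0$ has at least $2m+2$ zeros per period, which contradicts its degree being at most $m$.
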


\begin{proof}
Theorem~3.7 of~\cite{QueffelecZarouf2019} proves $(T'(u))^2+m^2(T(u))^2\le m^2$, yielding $|T'(u)|\leq m\sqrt{1-T(u)^2}$.

Now assume $T(0)=1$.
For $0<\varepsilon<1$, put $T_\varepsilon=(1-\varepsilon)T$ and $\theta_\varepsilon(u)=\arccos T_\varepsilon(u)$.
The first part of the lemma applied to $T_\varepsilon$ gives
$|T_\varepsilon'(u)|\le m\sqrt{1-T_\varepsilon(u)^2}$.
Since $|T_\varepsilon|<1$, the function $\theta_\varepsilon$ is differentiable.
Hence $|\theta_\varepsilon'(u)|=\tfrac{|T_\varepsilon'(u)|}{\sqrt{1-T_\varepsilon(u)^2}}\leq m$.
Therefore $\theta_\varepsilon(u) \leq \theta_\varepsilon(0)+m|u|$.
Letting $\varepsilon\to0^+$ yields $\arccos T(u)\leq m|u|$.
If $|u|\leq\tfrac{\pi}{m}$, both sides lie in $[0,\pi]$, on which cosine is decreasing.
It follows that $T(u)\geq \cos(m|u|)$ for $|u|\leq \tfrac{\pi}{m}$.
\end{proof}

\begin{lemma}\label{lem:sine}
For every odd integer $m\geq3$ and every $0\leq z\leq\tfrac{\pi}{2m}$, we have $\tfrac{\sin(mz)}{\sqrt{m+1}}\geq\sin z$.
\end{lemma}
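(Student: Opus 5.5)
We need to prove $\sin(mz) \ge \sqrt{m+1}\,\sin z$ for odd $m\ge 3$ and $0\le z\le \pi/(2m)$. The plan is to compare both sides against simple polynomial bounds in $z$, reducing the claim to an elementary inequality at a single point.

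First, rescale with $w = mz \in [0,\pi/2]$. By concavity of sine on $[0,\pi/2]$ (Jordan's inequality), $\sin w \ge \tfrac{2}{\pi} w$. Also $\sin z \le z = w/m$. It therefore suffices to show $\tfrac{2}{\pi} w \ge \sqrt{m+1}\, w/m$, that is, $\tfrac{2m}{\pi} \ge \sqrt{m+1}$, or equivalently $4m^2 \ge \pi^2 (m+1)$.

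Next, check this quadratic inequality. At $m=3$ it reads $36 \ge 4\pi^2 \approx 39.5$, which fails. At $m=5$ it reads $100 \ge 6\pi^2 \approx 59.2$, which holds. The function $4m^2 - \pi^2 m - \pi^2$ is increasing for $m\ge 2$, so the crude bound settles every $m\ge 5$. It even holds for real $m\ge 3.1$ or so, so only $m=3$ needs separate treatment.

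The main obstacle is the case $m=3$, where the target is $\sin 3z \ge 2\sin z$ for $0\le z\le \pi/6$. Here I would use the exact identity $\sin 3z = 3\sin z - 4\sin^3 z$. The inequality becomes $\sin z\,(1 - 4\sin^2 z)\ge 0$. On $[0,\pi/6]$ we have $\sin z \ge 0$ and $\sin^2 z\le 1/4$, so it holds, with equality at both endpoints. This shows the constant $\sqrt{m+1}$ is sharp for $m=3$, which is why the crude Jordan bound cannot work there.

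Finally, the case $z=0$ is trivial, and the bounds used are valid on the whole range, so the two cases together cover all odd $m\ge 3$. Since only odd $m$ occur, no separate treatment of $m=4$ is needed, although the general estimate would cover it anyway.
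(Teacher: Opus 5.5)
Your proof is correct and follows the same argument as the paper: the triple-angle identity $\sin 3z=3\sin z-4\sin^3 z$ settles $m=3$, and for $m\ge 5$ Jordan's inequality together with $\sin z\le z$ reduces the claim to $4m^2\ge\pi^2(m+1)$, which the paper checks using $\pi^2<10$. One harmless slip is that the real threshold for this quadratic is about $3.23$ rather than $3.1$ (it fails at $m=3.1$); this does not matter, since only odd integers occur.
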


\begin{proof}
The claim is clear for $z=0$; let $z>0$.
For $m=3$, we have $\tfrac{\sin(3z)}{\sin z}=3-4\sin^2z\geq2=\sqrt{4}$ because $0< z\leq\frac{\pi}{6}$.
Now let $m\geq5$.
Since $0\leq mz\leq\frac{\pi}{2}$, concavity of sine on $[0,\frac{\pi}{2}]$ gives $\sin(mz)\geq\tfrac{2mz}{\pi}$, while $\sin z\leq z$.
Also $\tfrac{2m}{\pi}\geq\sqrt{m+1}$ for $m\geq5$; for example, $\pi^2<10$ and $4m^2-10(m+1)>0$ for every $m\geq5$.
Hence $\sin(mz)\geq\tfrac{2mz}{\pi}\geq \sqrt{m+1}\,z\geq\sqrt{m+1}\sin z$.
\end{proof}

\bibliographystyle{plain}
\bibliography{TightUpperBound-GPTsShortProof}

\end{document}